  \DeclareSymbolFont{AMSb}{U}{msb}{m}{n}
  \DeclareSymbolFontAlphabet{\mathbb}{AMSb}}  
\definecolor{Gray}{gray}{0.93} 
\definecolor{LightCyan}{rgb}{0.88,1,1}
\theoremstyle{plain}
\newtheorem{theorem}{Theorem}[section]
\newtheorem{corollary}[theorem]{Corollary}
\theoremstyle{definition}
\theoremstyle{remark}
\numberwithin{equation}{section}
\def\th@plain{%
  \thm@notefont{}
  \itshape 
}
\def\th@definition{%
  \thm@notefont{}
  \normalfont 
} \makeatother
\setlist{font=\normalfont}
\DeclareMathAlphabet{\cols}{OMS}{cmsy}{m}{n} %
\newcommand{\R}{\mathbb{R}}
\newcommand{\B}{\mathbb{B}}
\newcommand{\set}[1]{\{#1\}}
\newcommand{\cset}[2]{\set{{#1}\colon{#2}}}
\newcommand{\norm}[1]{\|#1\|}
\newcommand{\Bp}[1]{\left(#1\right)}
\newcommand{\gen}[1]{\langle#1\rangle}
\DeclareSymbolFont{newfont}{OML}{cmm}{m}{it} 
\DeclareMathSymbol{\Varrho}{3}{newfont}{37}
\newcommand{\gyr}[2]{{\mathrm{gyr}[{#1}]}{#2}}
\newcommand{\aut}[1]{\mathrm{Aut}\,{(#1)}}
\newcommand{\igyr}[2]{{\mathrm{gyr^{-1}}[{#1}]}{#2}}
\newcommand{\Or}[1]{\mathrm{O}({#1})}
\newcommand{\res}[2]{{#1}\hskip-3pt\mid_{#2}}
\newcommand{\Gyr}[2]{{\mathrm{Gyr}[{#1}]}{#2}}
\newcommand{\Iso}[1]{\mathrm{Iso}\,{(#1)}}
\newcommand{\vrho}{\Varrho{\hskip-1.5pt}}
\renewcommand{\vec}[1]{\mathbf{#1}}
\begin{document}
\title{\bf{The isometry group of $n$-dimensional Einstein gyrogroup}}
\author{
Teerapong Suksumran\,\href{https://orcid.org/0000-0002-1239-5586}{\includegraphics[scale=1]{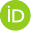}}\\
Department of Mathematics\\
Faculty of Science, Chiang Mai University\\
Chiang Mai 50200, Thailand\\
{\tt teerapong.suksumran@cmu.ac.th}}
\date{}
\maketitle


\begin{abstract}
The space of $n$-dimensional relativistic velocities normalized to $c = 1$, $$\mathbb{B} = \{\vec{v}\in\mathbb{R}^n\colon \|\vec{v}\| < 1\},$$ 
is naturally associated with Einstein velocity addition $\oplus_E$, which induces the \mbox{rapidity} metric $d_E$ on $\B$ given by $d_E(\vec{u}, \vec{v}) = \tanh^{-1}\|-\vec{u}\oplus_E\vec{v}\|$. This metric is also known as the Cayley--Klein metric. We give a complete description of the isometry group of $(\B, d_E)$, along with its composition law.
\end{abstract}
\textbf{Keywords.} Einstein velocity addition, Einstein gyrogroup, Cayley--Klein metric, gyrometric, isometry group.\\[3pt]
\textbf{2010 MSC.} Primary 51A05; Secondary 83A05, 51F25, 20N05.
\thispagestyle{empty}

\section{Introduction}
The space of $n$-dimensional relativistic velocities normalized to $c = 1$, 
$$\B = \cset{\vec{v}\in\R^n}{\norm{\vec{v}} < 1},$$ 
has various underlying mathematical structures, including a bounded symmetric space structure \cite{SKJL2011SBL, MR1290679} and a gyrovector space structure \cite{AU2008AHG}. Further, it is a primary object in special relativity in the case when $n = 3$ \cite{YFTS2005PAH, MR3455332}. Of particular importance is reflected in the composition law of Lorentz boosts:
$$
L(\vec{u})\circ L(\vec{v}) = L(\vec{u}\oplus_E\vec{v})\circ
\Gyr{\vec{u},\vec{v}}{},
$$
where $L(\vec{u})$ and $L(\vec{v})$ are Lorentz boosts parametrized by $\vec{u}$ and $\vec{v}$ respectively, $\oplus_E$ is Einstein velocity addition (defined below), and $\Gyr{\vec{u},\vec{v}}{}$ is a rotation of spacetime coordinates induced by an Einstein addition preserving map (namely an Einstein gyroautomorphism) \cite[p. 448]{AU2008AHG}. Moreover, the unit ball $\B$ gives rise to a model for $n$-dimensional hyperbolic geometry when it is endowed with the Cayley--Klein metric as well as the Poincar\'{e} metric \cite{SKJL2013UBL, JR2006FHM}. 

The open unit ball of $\R^n$ admits a group-like structure when it is endowed with {\it Einstein addition} $\oplus_E$, defined by
\begin{equation}\label{eqn: Euclidean Einstein addition}
\vec{u}\oplus_E\vec{v} =
\dfrac{1}{1+\gen{\vec{u},\vec{v}}}\Bp{\vec{u}+
\dfrac{1}{\gamma_{\vec{u}}}\vec{v} +
\dfrac{\gamma_{\vec{u}}}{1+\gamma_{\vec{u}}}\gen{\vec{u},\vec{v}}\vec{u}},
\end{equation}
where $\gen{\cdot, \cdot}$ denotes the usual Euclidean inner product and $\gamma_{\vec{u}}$ is the {\it Lorentz factor} normalized to $c = 1$ given by $\gamma_{\vec{u}} = \dfrac{1}{\sqrt{1-\norm{\vec{u}}^2}}$. In fact, the space $(\B, \oplus_E)$ satisfies the following properties \cite{TSKW2015EGB, AU2008AHG}:
\begin{enumerate}[label=\Roman*.]
    \item\label{item: identity} ({\sc identity}) The zero vector $\vec{0}$ satisfies $\vec{0}\oplus_E \vec{v} = \vec{v} = \vec{v}\oplus_E\vec{0}$ for all $\vec{v}\in \B$.
    \item ({\sc inverse}) For each $\vec{v}\in \B$, the negative vector $-\vec{v}$ belongs to $\B$ and satisfies $$(-\vec{v})\oplus_E \vec{v} = \vec{0} = \vec{v}\oplus_E(-\vec{v}).$$
    \item\label{item: gyroassociative law} ({\sc the gyroassociative law}) For all $\vec{u}, \vec{v}\in \B$, there are Einstein addition preserving bijective self-maps $\gyr{\vec{u},\vec{v}}{}$ and $\gyr{\vec{v},\vec{u}}{}$ of $\B$ such that
\begin{equation*}
\vec{u}\oplus_E (\vec{v}\oplus_E \vec{w}) = (\vec{u}\oplus_E \vec{v})\oplus_E\gyr{\vec{u}, \vec{v}}{\vec{w}}
\end{equation*}
and 
\begin{equation*}
(\vec{u}\oplus_E \vec{v})\oplus_E \vec{w} = \vec{u}\oplus_E (\vec{v}\oplus_E\gyr{\vec{v}, \vec{u}}{\vec{w}})
\end{equation*}
for all $\vec{w}\in \B$.
    \item\label{item: loop property} ({\sc the loop property}) For all $\vec{u},\vec{v}\in \B$, 
$$
\gyr{\vec{u}\oplus_E \vec{v}, \vec{v}}{} = \gyr{\vec{u}, \vec{v}}{}\quad\textrm{and}\quad \gyr{\vec{u}, \vec{v}\oplus_E \vec{u}}{} = \gyr{\vec{u}, \vec{v}}{}.
$$    
\item\label{item: gyrocommutative} ({\sc the gyrocommutative law}) For all $\vec{u},\vec{v}\in \B$, 
$$
\vec{u}\oplus_E\vec{v} = \gyr{\vec{u}, \vec{v}}{(\vec{v}\oplus_E\vec{u})}.
$$
\end{enumerate}
From properties I through V, it follows that $(\B, \oplus_E)$ forms a {\it gyrocommutative \mbox{gyrogroup}} (also called a {\it K-loop} or {\it Bruck loop}), which shares several properties with groups \cite{TS2016TAG, AU2008AHG}. However, Einstein addition is a nonassociative operation so that $(\B, \oplus_E)$ fails to form a group. Property III resembles the associative law in groups and property V resembles the commutative law in abelian groups. The map $\gyr{\vec{u}, \vec{v}}{}$ in property III is called an {\it Einstein gyroautomorphism}, which turns out to be a \mbox{rotation} of the unit ball. Henceforward, $(\B, \oplus_E)$ is referred to as the ($n$-dimensional) {\it Einstein gyrogroup}.

Recall that the {\it rapidity} of a vector $\vec{v}$ in $\B$ (cf. \cite[p. 1229]{SKJL2013UBL}) is defined by 
\begin{equation}
\phi(\vec{v}) = \tanh^{-1}{\norm{\vec{v}}}.
\end{equation}

\begin{theorem}\label{thm: property of rapidity}
The rapidity $\phi$ satisfies the following properties:
\begin{enumerate}
\item\label{item: positivity} $\phi(\vec{v})\geq 0$ and $\phi(\vec{v}) = 0$ if and only if $\vec{v} = \vec{0}$;
\item\label{item: invariant inverse} $\phi(-\vec{v}) = \phi(\vec{v})$;
\item\label{item: subadditivity} $\phi(\vec{u}\oplus_E \vec{v})\leq \phi(\vec{u}) + \phi(\vec{v})$;
\item\label{item: invariant gyration} $\phi(\gyr{\vec{u}, \vec{v}}{\vec{w}}) = \phi(\vec{w})$
\end{enumerate}
for all $\vec{u}, \vec{v}, \vec{w}\in\B$.
\end{theorem}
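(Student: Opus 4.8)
The first, second, and fourth assertions are essentially immediate. For \eqref{item: positivity}, note that $t\mapsto\tanh^{-1}t$ is strictly increasing on $[0,1)$ and vanishes only at $t=0$, while $\norm{\vec{v}}\geq 0$ with equality precisely when $\vec{v}=\vec{0}$. For \eqref{item: invariant inverse}, simply observe $\norm{-\vec{v}}=\norm{\vec{v}}$. For \eqref{item: invariant gyration}, recall (as noted just after property \ref{item: gyroassociative law}) that every Einstein gyroautomorphism $\gyr{\vec{u},\vec{v}}{}$ is the restriction to $\B$ of a rotation of $\R^n$, hence preserves the Euclidean norm; thus $\norm{\gyr{\vec{u},\vec{v}}{\vec{w}}}=\norm{\vec{w}}$ and the claim follows by applying $\tanh^{-1}$.

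The real content is the subadditivity \eqref{item: subadditivity}, and the plan is to pass from the Euclidean picture to hyperbolic functions via the rapidity. First I would record the Lorentz-factor identity
\begin{equation*}
\gamma_{\vec{u}\oplus_E\vec{v}} = \gamma_{\vec{u}}\gamma_{\vec{v}}\Bp{1+\gen{\vec{u},\vec{v}}},
\end{equation*}
obtained by substituting \eqref{eqn: Euclidean Einstein addition} into $\gamma_{\vec{w}}^{-2}=1-\norm{\vec{w}}^2$ and simplifying; here $1+\gen{\vec{u},\vec{v}}>0$ because $\abs{\gen{\vec{u},\vec{v}}}\leq\norm{\vec{u}}\,\norm{\vec{v}}<1$ by Cauchy--Schwarz. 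Writing $s=\phi(\vec{u})$ and $t=\phi(\vec{v})$, the relations $\norm{\vec{u}}=\tanh s$, $\norm{\vec{v}}=\tanh t$ give $\gamma_{\vec{u}}=\cosh s$, $\gamma_{\vec{v}}=\cosh t$, $\gamma_{\vec{u}}\norm{\vec{u}}=\sinh s$, $\gamma_{\vec{v}}\norm{\vec{v}}=\sinh t$, and likewise $\gamma_{\vec{u}\oplus_E\vec{v}}=\cosh\phi(\vec{u}\oplus_E\vec{v})$.

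Combining these with Cauchy--Schwarz, $\gen{\vec{u},\vec{v}}\leq\norm{\vec{u}}\,\norm{\vec{v}}$, yields
\begin{equation*}
\cosh\phi(\vec{u}\oplus_E\vec{v}) = \cosh s\,\cosh t\,\Bp{1+\gen{\vec{u},\vec{v}}} \leq \cosh s\,\cosh t + \sinh s\,\sinh t = \cosh(s+t).
\end{equation*}
Since $\cosh$ is strictly increasing on $[0,\infty)$ and $s,t\geq 0$ by \eqref{item: positivity}, this forces $\phi(\vec{u}\oplus_E\vec{v})\leq s+t=\phi(\vec{u})+\phi(\vec{v})$, establishing \eqref{item: subadditivity}. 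The only step requiring genuine computation is the verification of the Lorentz-factor identity, which is routine (and can alternatively be quoted from the literature on Einstein addition); everything else is formal manipulation of elementary inequalities together with monotonicity of $\tanh^{-1}$ and $\cosh$.
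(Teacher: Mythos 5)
Your proof is correct. Items \eqref{item: positivity}, \eqref{item: invariant inverse}, and \eqref{item: invariant gyration} are handled exactly as in the paper (the paper likewise reduces \eqref{item: invariant gyration} to the fact that gyroautomorphisms are restrictions of orthogonal transformations, citing Theorem 3 of \cite{TSKW2015EGB}). For the substantive item \eqref{item: subadditivity} you take a genuinely different, though closely related, route. The paper quotes the norm estimate $\norm{\vec{u}\oplus_E\vec{v}} \leq \frac{\norm{\vec{u}} + \norm{\vec{v}}}{1+\norm{\vec{u}}\norm{\vec{v}}}$ from Proposition 3.3 of \cite{SKJL2013UBL} and concludes via the addition formula for $\tanh$; you instead start from the Lorentz-factor identity $\gamma_{\vec{u}\oplus_E\vec{v}} = \gamma_{\vec{u}}\gamma_{\vec{v}}\bigl(1+\gen{\vec{u},\vec{v}}\bigr)$, apply Cauchy--Schwarz, and conclude via the addition formula for $\cosh$. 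The two arguments are essentially two faces of the same computation --- the Kim--Lawson norm bound is precisely what the gamma identity plus Cauchy--Schwarz gives after converting back to norms --- but yours is more self-contained: the gamma identity is a clean, verifiable algebraic identity (and is standard in the literature on Einstein addition), whereas the paper defers the analytic inequality entirely to a citation. The one step you should not wave away is the verification of the gamma identity itself, since the whole argument rests on it; either carry out the substitution of \eqref{eqn: Euclidean Einstein addition} into $1-\norm{\cdot}^2$ explicitly or give a precise reference, exactly as you suggest.
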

\begin{proof}
Items \ref{item: positivity} and \ref{item: invariant inverse} are clear. To prove item \ref{item: subadditivity}, let $\vec{u}, \vec{v}\in\B$. By Proposition 3.3 of \cite{SKJL2013UBL}, $\norm{\vec{u}\oplus_E\vec{v}} \leq \dfrac{\norm{\vec{u}} + \norm{\vec{v}}}{1+\norm{\vec{u}}\norm{\vec{v}}}$. Set $u = \tanh^{-1}\norm{\vec{u}}$ and $v = \tanh^{-1}\norm{\vec{v}}$. Then
$$
\norm{\vec{u}\oplus_E\vec{v}} \leq \dfrac{\tanh{u} + \tanh{v}}{1+(\tanh u)(\tanh v)} = \tanh{(u+v)},
$$
which implies $\phi(\vec{u}\oplus_E \vec{v}) = \tanh^{-1}{\norm{\vec{u}\oplus_E\vec{v}}} \leq u+v = \phi(\vec{u})+\phi(\vec{v})$. Item \ref{item: invariant gyration} \mbox{follows} from the fact that any gyroautomorphism of the Einstein gyrogroup is indeed the restriction of an orthogonal transformation of $\R^n$ to $\B$ so that it preserves the \mbox{Euclidean} norm (and hence also the rapidity); see, for instance, Theorem 3 of \cite{TSKW2015EGB} and Proposition 2.4 of \cite{SKJL2013UBL}.
\end{proof}

Theorem \ref{thm: property of rapidity} implies that $d_E$ defined by
\begin{equation}\label{eqn: rapidity metric on Einstein gyrogroup}
d_E(\vec{u}, \vec{v}) = \phi(-\vec{u}\oplus_E\vec{v}) = \tanh^{-1}\norm{-\vec{u}\oplus_E\vec{v}}
\end{equation}
for all $\vec{u},\vec{v}\in\B$ is indeed a metric (or a distance function) on $\B$, called the {\it \mbox{rapidity} metric} of the Einstein gyrogroup. In Theorem 3.9 of \cite{SKJL2013UBL}, Kim and Lawson prove that $d_E$ agrees with the {\it Cayley--Klein metric}, defined from cross-ratios, in the Beltrami--Klein model of $n$-dimensional hyperbolic geometry. Equation \eqref{eqn: rapidity metric on Einstein gyrogroup} includes what \mbox{Ungar} refers to as the (Einstein) gyrometric, which is defined by
\begin{equation}
\vrho_E(\vec{u}, \vec{v}) = \norm{-\vec{u}\oplus_E\vec{v}}
\end{equation}
for all $\vec{u},\vec{v}\in\B$. Using Proposition 3.3 of \cite{SKJL2013UBL}, we obtain that $$\norm{\vec{u}\oplus_E\vec{v}} \leq  \dfrac{\norm{\vec{u}} + \norm{\vec{v}}}{1+\norm{\vec{u}}\norm{\vec{v}}} \leq \norm{\vec{u}}+\norm{\vec{v}}$$ for all $\vec{u},\vec{v}\in\B$ and so the gyrometric $\vrho_E$ is indeed a metric on $\B$. In fact, this is a consequence of Theorem 3.2 of \cite{TS2018MSN}. Since $\tanh^{-1}$ is an injective function, it follows that a self-map of $\B$ preserves $d_E$ if and only if it preserves $\vrho_E$. Hence, $(\B, d_E)$ and $(\B, \vrho_E)$ have the same isometry group.

The next theorem lists some useful algebraic properties of the Einstein gyro-group, which will be essential in studying the geometric structure of the unit ball in Section \ref{sec: main result}.

\begin{theorem}[See \cite{AU2008AHG, TS2016TAG}]\label{thm: basic propety of B}
The following properties are true in $(\B, \oplus_E)$:
\begin{enumerate}
\item $-\vec{u}\oplus_E (\vec{u}\oplus_E \vec{v}) = \vec{v}$;\hfill{\normalfont\sc (left cancellation law)}
\item $-(\vec{u}\oplus_E \vec{v}) = \gyr{\vec{u}, \vec{v}}{(-\vec{v}\oplus_E -\vec{u})}$;
\item $(-\vec{u}\oplus_E \vec{v})\oplus_E \gyr{-\vec{u}, \vec{v}}{(-\vec{v}\oplus_E \vec{w})} = -\vec{u}\oplus_E \vec{w}$;
\item $\gyr{-\vec{u}, -\vec{v}}{} = \gyr{\vec{u}, \vec{v}}{}$;\hfill{\normalfont\sc (even property)}
\item $\gyr{\vec{v}, \vec{u}}{} = \igyr{\vec{u}, \vec{v}}{}$, where $\igyr{\vec{u}, \vec{v}}{}$ denotes the inverse of $\gyr{\vec{u}, \vec{v}}{}$ with respect to composition of functions;\hfill{\normalfont\sc (inversive symmetry)}
\item\label{item: hyperbolic translation} $L_{\vec{u}}\colon \vec{v}\mapsto \vec{u}\oplus_E \vec{v}$ defines a bijective self-map of $\B$ and $L_{\vec{u}}^{-1} = L_{-\vec{u}}$.
\end{enumerate}
\end{theorem}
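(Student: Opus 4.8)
The six identities are classical facts about gyrocommutative gyrogroups, recorded for instance in \cite{AU2008AHG, TS2016TAG}; the plan for a self-contained proof is to deduce them from properties \ref{item: identity}--\ref{item: gyrocommutative}, with the left cancellation law~(1) serving as the hinge. I would first record the auxiliary identity $\gyr{\vec 0,\vec v}{} = \mathrm{id}_\B$, from which the loop property gives $\gyr{-\vec v,\vec v}{} = \gyr{-\vec v\oplus_E\vec v,\vec v}{} = \gyr{\vec 0,\vec v}{} = \mathrm{id}_\B$. Instantiating the left gyroassociative identity at $-\vec u$, $\vec u$, $\vec v$ then yields
\[
-\vec u\oplus_E(\vec u\oplus_E\vec v) = (-\vec u\oplus_E\vec u)\oplus_E\gyr{-\vec u,\vec u}{\vec v} = \vec 0\oplus_E\vec v = \vec v,
\]
which is~(1). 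In particular left translations are injective, so~(1) applied to $\vec u$ and to $-\vec u$ shows $L_{\vec u}\circ L_{-\vec u} = L_{-\vec u}\circ L_{\vec u} = \mathrm{id}_\B$, i.e.~(6).

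The remaining items among the first five follow by short formal manipulations. For~(5), I would rewrite the left gyroassociative law in the form $(\vec u\oplus_E\vec v)\oplus_E\vec w = \vec u\oplus_E(\vec v\oplus_E\igyr{\vec u,\vec v}{\vec w})$, compare it with the right gyroassociative identity of property~\ref{item: gyroassociative law}, and cancel $L_{\vec u}$ and then $L_{\vec v}$ by~(1) to conclude $\gyr{\vec v,\vec u}{} = \igyr{\vec u,\vec v}{}$. Item~(3) is one application of the left gyroassociative law with $\vec u$ replaced by $-\vec u$ and $\vec w$ by $-\vec v\oplus_E\vec w$, together with $\vec v\oplus_E(-\vec v\oplus_E\vec w) = \vec w$ from~(1). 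For~(2), substituting $\vec w = -\vec v\oplus_E-\vec u$ into the left gyroassociative law collapses its left-hand side to $\vec u\oplus_E-\vec u = \vec 0$ (again by~(1)), whence $(\vec u\oplus_E\vec v)\oplus_E\gyr{\vec u,\vec v}{(-\vec v\oplus_E-\vec u)} = \vec 0$, and uniqueness of inverses identifies the gyrated vector with $-(\vec u\oplus_E\vec v)$. (None of~(1), (2), (3), (5), (6) uses the gyrocommutative law.)

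The step I expect to be the genuine obstacle is the even property~(4): it is the only one of the six that is not a short formal consequence of the (gyro)associative and cancellation laws, and it does use property~\ref{item: gyrocommutative}. Two routes look viable. The algebraic route combines the gyrosum inversion law~(2) with the automorphic inverse property $-(\vec u\oplus_E\vec v) = -\vec u\oplus_E-\vec v$ (valid in gyrocommutative gyrogroups), the loop property, and the inversive symmetry~(5), rewriting $\gyr{-\vec u,-\vec v}{}$ in terms of left translations and reducing it to $\gyr{\vec u,\vec v}{}$; this is the path followed in the gyrogroup/Bruck-loop theory of \cite{AU2008AHG, TS2016TAG}, and some care is needed to keep the reduction from becoming circular. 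The concrete route uses the closed form of the Einstein gyroautomorphism, $\gyr{\vec u,\vec v}{\vec w} = \vec w + (A\vec u + B\vec v)/D$: here $D = D(\vec u,\vec v)$ is built only from the Lorentz factors $\gamma_{\vec u},\gamma_{\vec v}$ and from $\gen{\vec u,\vec v}$, hence is invariant under $(\vec u,\vec v)\mapsto(-\vec u,-\vec v)$, while linearity of $\gyr{\vec u,\vec v}{}$ in $\vec w$ forces every term of $A$ and of $B$ to carry exactly one of the odd factors $\gen{\vec u,\vec w}$, $\gen{\vec v,\vec w}$ (the rest being even), so $A$ and $B$ are odd and $A\vec u + B\vec v$ is invariant as well; a parity count then gives $\gyr{-\vec u,-\vec v}{} = \gyr{\vec u,\vec v}{}$. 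For a self-contained account I would present the parity argument for~(4) and refer to \cite{AU2008AHG, TS2016TAG} for the longer purely algebraic derivation.
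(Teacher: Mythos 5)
The paper gives no proof of this theorem at all --- it is imported wholesale from \cite{AU2008AHG, TS2016TAG} --- so any derivation you supply is necessarily ``a different route,'' and yours is essentially the standard one from the cited sources: derive the left cancellation law (1) from axioms I--IV, get (6) immediately, obtain (5), (3), (2) by formal manipulation of the two gyroassociative identities, and isolate the even property (4) as the one item that needs more than gyroassociativity. All of these steps check out. Two remarks. First, the auxiliary identity $\gyr{\vec{0},\vec{v}}{}=\mathrm{id}_\B$ is not quite free of charge: the left gyroassociative law with first entry $\vec{0}$ gives $\vec{v}\oplus_E\vec{w}=\vec{v}\oplus_E\gyr{\vec{0},\vec{v}}{\vec{w}}$, and cancelling $\vec{v}$ already requires the general cancellation ``$\vec{a}\oplus_E\vec{b}=\vec{a}\oplus_E\vec{c}\Rightarrow\vec{b}=\vec{c}$''; that in turn follows non-circularly from $-\vec{a}\oplus_E(\vec{a}\oplus_E\vec{b})=\gyr{-\vec{a},\vec{a}}{\vec{b}}$ together with the bijectivity of gyrations in axiom III, so the correct order is: general cancellation, then $\gyr{\vec{0},\vec{v}}{}=\mathrm{id}_\B$, then $\gyr{-\vec{v},\vec{v}}{}=\mathrm{id}_\B$ via the loop property, then (1). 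You should make this ordering explicit. Second, for (4) you can shortcut both of your proposed routes: the paper itself observes that $\iota\colon\vec{w}\mapsto-\vec{w}$ is an automorphism of $(\B,\oplus_E)$, as is clear from \eqref{eqn: Euclidean Einstein addition}; applying $\iota$ to the identity $\gyr{\vec{u},\vec{v}}{\vec{w}}=-(\vec{u}\oplus_E\vec{v})\oplus_E\bigl(\vec{u}\oplus_E(\vec{v}\oplus_E\vec{w})\bigr)$ (itself a consequence of gyroassociativity and (1)) yields $\gyr{-\vec{u},-\vec{v}}{(-\vec{w})}=-\gyr{\vec{u},\vec{v}}{\vec{w}}$, and since gyrations are automorphisms and hence commute with $\iota$, this is exactly $\gyr{-\vec{u},-\vec{v}}{}=\gyr{\vec{u},\vec{v}}{}$; this is shorter than the parity computation with the explicit Einstein gyration formula and avoids the circularity worry entirely. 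With these two adjustments your sketch is a complete and correct proof.
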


\section{Main Results}\label{sec: main result}

Let $\Or{\R^n}$ be the orthogonal group of $n$-dimensional Euclidean space $\R^n$; that is, $\Or{\R^n}$ consists precisely of (bijective) Euclidean inner product preserving transformations of $\R^n$ (also called orthogonal transformations of $\R^n$). Since  the unit ball $\B$ is invariant under orthogonal transformations of $\R^n$, it follows that the set
\begin{equation}
\Or{\B} = \cset{\res{\tau}{\B}}{\tau\in\Or{\R^n}},
\end{equation}
where $\res{\tau}{\B}$ denotes the restriction of $\tau$ to $\B$, forms a group under composition of functions. Note that Einstein addition is defined entirely in terms of vector addition, scalar multiplication, and the Euclidean inner product. Hence, every orthogonal transformation of $\R^n$ restricts to an automorphism of $\B$ that leaves the Euclidean norm invariant. In particular, the map $\iota\colon \vec{v}\mapsto - \vec{v}$ defines an automorphism of $\B$. Note that $\Or{\B}$ is a subgroup of the (algebraic) automorphism group of $(\B, d_E)$, denoted by $\aut{\B, d_E}$.

Let $\vec{u},\vec{v}\in \B$. It is not difficult to check that $\gyr{\vec{u}, \vec{v}}{}$ satisfies the following \mbox{properties}:
\begin{enumerate}
\item $\gyr{\vec{u}, \vec{v}}{\vec{0}} = \vec{0}$;
\item $\gyr{\vec{u}, \vec{v}}{}$ is an automorphism of $(\B,\oplus_E)$;
\item $\gyr{\vec{u}, \vec{v}}{}$ preserves the gyrometric $\vrho_E$.
\end{enumerate}
Hence, by Theorem 3.1 of \cite{TA2014GPM}, there is an orthogonal transformation $\phi$ of $\R^n$ for which $\res{\phi}{\B} = \gyr{\vec{u}, \vec{v}}{}$. This proves the following inclusion:
$$
\cset{\gyr{\vec{u}, \vec{v}}{}}{\vec{u},\vec{v}\in\B}\subseteq \Or{\B}.
$$

\begin{theorem}\label{thm: left gyrotranslation isometry}
For all $\vec{u}\in\B$, the left gyrotranslation $L_{\vec{u}}$ defined by $L_{\vec{u}}(\vec{v}) = \vec{u}\oplus_E\vec{v}$ is an isometry of $\B$ with respect to $d_E$.
\end{theorem}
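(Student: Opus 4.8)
The plan is to reduce the statement to the gyration-invariance of the rapidity $\phi$ proved in Theorem~\ref{thm: property of rapidity}. Since $L_{\vec u}$ is already a bijective self-map of $\B$ (with inverse $L_{-\vec u}$) by part~\ref{item: hyperbolic translation} of Theorem~\ref{thm: basic propety of B}, the only thing left to verify is that $L_{\vec u}$ preserves $d_E$. Unwinding \eqref{eqn: rapidity metric on Einstein gyrogroup}, for $\vec v,\vec w\in\B$ we have $d_E(L_{\vec u}(\vec v),L_{\vec u}(\vec w))=\phi\bigl(-(\vec u\oplus_E\vec v)\oplus_E(\vec u\oplus_E\vec w)\bigr)$, so it suffices to show this equals $\phi(-\vec v\oplus_E\vec w)=d_E(\vec v,\vec w)$.

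The key step is the gyrogroup identity
\[
-(\vec u\oplus_E\vec v)\oplus_E(\vec u\oplus_E\vec w)=\gyr{\vec u,\vec v}{(-\vec v\oplus_E\vec w)}.
\]
I would derive it from part~(3) of Theorem~\ref{thm: basic propety of B}: replacing $\vec u$ by $-\vec u$ there (which is legitimate since $-\vec u\in\B$) and using $-(-\vec u)=\vec u$ yields $(\vec u\oplus_E\vec v)\oplus_E\gyr{\vec u,\vec v}{(-\vec v\oplus_E\vec w)}=\vec u\oplus_E\vec w$. Adding $-(\vec u\oplus_E\vec v)$ on the left to both sides and applying the left cancellation law (part~(1) of Theorem~\ref{thm: basic propety of B}) to collapse the left-hand side isolates $\gyr{\vec u,\vec v}{(-\vec v\oplus_E\vec w)}$ there, while the right-hand side becomes $-(\vec u\oplus_E\vec v)\oplus_E(\vec u\oplus_E\vec w)$; this is exactly the claimed identity.

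With that identity in hand, I would conclude by applying $\phi$ to both sides and invoking part~\ref{item: invariant gyration} of Theorem~\ref{thm: property of rapidity}, which says that $\phi$ is unchanged by any Einstein gyroautomorphism: $d_E(L_{\vec u}(\vec v),L_{\vec u}(\vec w))=\phi\bigl(\gyr{\vec u,\vec v}{(-\vec v\oplus_E\vec w)}\bigr)=\phi(-\vec v\oplus_E\vec w)=d_E(\vec v,\vec w)$, as desired.

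I do not anticipate a genuine obstacle here: the argument is a short chain of cancellation identities from Theorem~\ref{thm: basic propety of B} together with the gyration-invariance of $\phi$. The only point that needs care is the algebraic bookkeeping — carrying out the substitution $\vec u\mapsto-\vec u$ and the left cancellation in the correct order — and it is worth noting that, unlike the analogous statement for right gyrotranslations, this left version uses no property of the gyroautomorphisms $\gyr{\vec u,\vec v}{}$ beyond their preservation of the rapidity; in particular it does not require the gyrocommutative law.
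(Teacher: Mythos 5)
Your proof is correct and ends up where the paper's does --- reducing the claim to the invariance of the rapidity under gyroautomorphisms --- but it gets there by a somewhat different and arguably cleaner route. The paper works at the level of norms throughout: starting from $\norm{-(\vec u\oplus_E\vec x)\oplus_E(\vec u\oplus_E\vec y)}$ it applies item (2) of Theorem~\ref{thm: basic propety of B}, then the automorphism property, norm-invariance and inversive symmetry of gyrations, then the even property, and finally item (3); the intermediate expressions agree only in norm, not as vectors. You instead isolate the exact vector identity $-(\vec u\oplus_E\vec v)\oplus_E(\vec u\oplus_E\vec w)=\gyr{\vec u,\vec v}{(-\vec v\oplus_E\vec w)}$ (the ``left gyrotranslation theorem'' in Ungar's terminology), obtained from item (3) with $\vec u\mapsto-\vec u$ followed by left cancellation, and only then apply $\phi$ once, invoking Theorem~\ref{thm: property of rapidity}(4). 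Your bookkeeping checks out: the substitution gives $(\vec u\oplus_E\vec v)\oplus_E\gyr{\vec u,\vec v}{(-\vec v\oplus_E\vec w)}=\vec u\oplus_E\vec w$, and left-adding $-(\vec u\oplus_E\vec v)$ and cancelling yields the identity. What your version buys is a shorter list of ingredients --- only items (1) and (3) of Theorem~\ref{thm: basic propety of B} plus the gyration-invariance of $\phi$, with no need for the even property, inversive symmetry, or the automorphism property of gyrations --- together with a reusable exact identity rather than a one-off chain of norm equalities.
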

\begin{proof}
Note that $L_{\vec{u}}$ is a bijective self-map of $\B$ since $L_{-\vec{u}}$ acts as its inverse (see, for instance, Theorem 10 (1) of \cite{TSKW2015ITG}). From Theorem \ref{thm: basic propety of B}, we have by inspection that
\begin{align*}
\norm{- (\vec{u}\oplus_E \vec{x})\oplus_E (\vec{u}\oplus_E \vec{y})} &= \norm{\gyr{\vec{u}, \vec{x}}{(- \vec{x} \oplus_E -\vec{u})}\oplus_E(\vec{u}\oplus_E \vec{y})}\\
{} &= \norm{(-\vec{x} \oplus_E -\vec{u})\oplus_E\gyr{\vec{x}, \vec{u}}{(\vec{u}\oplus_E \vec{y})}}\\
{} &= \norm{(-\vec{x} \oplus_E -\vec{u})\oplus_E\gyr{- \vec{x}, - \vec{u}}{(\vec{u}\oplus_E \vec{y})}}\\
{} &= \norm{-\vec{x}\oplus_E \vec{y}}.
\end{align*}
It follows that 
$$
d_E(L_\vec{u}(\vec{x}), L_\vec{u}(\vec{y})) = \tanh^{-1}\norm{-L_{\vec{u}}(\vec{x})\oplus_E L_{\vec{u}}(\vec{y})} = \tanh^{-1}\norm{-\vec{x}\oplus_E \vec{y}} = d_E(\vec{x}, \vec{y}). 
$$
This proves that $L_{\vec{u}}$ is an isometry of $(\B, d_E)$.
\end{proof}

\begin{corollary}\label{cor: gyroautomorphism as isometry}
The gyroautomorphisms of the Einstein gyrogroup are isometries of $\B$ with respect to $d_E$.
\end{corollary}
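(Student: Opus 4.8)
The plan is to obtain this as an immediate consequence of the algebraic facts already assembled, with essentially no new computation. Fix $\vec{u}, \vec{v}\in\B$ and abbreviate $g = \gyr{\vec{u}, \vec{v}}{}$. Two properties of $g$ recorded earlier do all the work: $g$ is an automorphism of $(\B, \oplus_E)$ (so in particular $g(\vec{0}) = \vec{0}$, $g$ is bijective, and $g$ commutes with $\oplus_E$), and, by item \ref{item: invariant gyration} of Theorem \ref{thm: property of rapidity}, $\phi(g(\vec{w})) = \phi(\vec{w})$ for every $\vec{w}\in\B$.

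First I would note that an automorphism of $(\B, \oplus_E)$ commutes with negation: from $g(\vec{x})\oplus_E g(-\vec{x}) = g(\vec{x}\oplus_E(-\vec{x})) = g(\vec{0}) = \vec{0}$ one gets $g(-\vec{x}) = -g(\vec{x})$. Hence, for all $\vec{x}, \vec{y}\in\B$,
$$
d_E(g(\vec{x}), g(\vec{y})) = \phi\bigl(-g(\vec{x})\oplus_E g(\vec{y})\bigr) = \phi\bigl(g(-\vec{x})\oplus_E g(\vec{y})\bigr) = \phi\bigl(g(-\vec{x}\oplus_E\vec{y})\bigr) = \phi(-\vec{x}\oplus_E\vec{y}) = d_E(\vec{x}, \vec{y}),
$$
where the third equality uses that $g$ commutes with $\oplus_E$ and the fourth is the rapidity-invariance of Theorem \ref{thm: property of rapidity}. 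Since $g$ is bijective, it follows that $g$ is an isometry of $(\B, d_E)$.

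I do not expect any real obstacle here; the statement is a corollary of Theorem \ref{thm: property of rapidity} together with the automorphism property of $g$. As an even shorter alternative, one could invoke the inclusion $\cset{\gyr{\vec{u}, \vec{v}}{}}{\vec{u}, \vec{v}\in\B}\subseteq\Or{\B}$ established above: every element of $\Or{\B}$ preserves the Euclidean norm, hence preserves $\vrho_E(\vec{x}, \vec{y}) = \norm{-\vec{x}\oplus_E\vec{y}}$ (because $\oplus_E$ is built only from vector addition, scalar multiplication, and the inner product), and therefore preserves $d_E = \tanh^{-1}\circ\,\vrho_E$. I would present the first argument as the main line, since it stays within the gyrogroup formalism, and mention the $\Or{\B}$ route only as a remark.
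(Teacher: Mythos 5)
Your argument is correct, but it takes a different route from the paper's own proof of this corollary. The paper proves it by invoking the composition law $L_{\vec{u}}\circ L_{\vec{v}} = L_{\vec{u}\oplus_E\vec{v}}\circ\gyr{\vec{u},\vec{v}}{}$, solving for the gyroautomorphism as $\gyr{\vec{u},\vec{v}}{} = L_{-(\vec{u}\oplus_E\vec{v})}\circ L_{\vec{u}}\circ L_{\vec{v}}$, and concluding that it is an isometry because it is a composite of left gyrotranslations, each of which is an isometry by Theorem \ref{thm: left gyrotranslation isometry}. Your proof instead uses only that $\gyr{\vec{u},\vec{v}}{}$ is an automorphism of $(\B,\oplus_E)$ together with the rapidity invariance $\phi(\gyr{\vec{u},\vec{v}}{\vec{w}}) = \phi(\vec{w})$ from Theorem \ref{thm: property of rapidity}; the step $g(-\vec{x}) = -g(\vec{x})$ is correctly justified (via uniqueness of inverses), and the chain of equalities is sound. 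In effect you have reproduced, in the special case of gyroautomorphisms, the argument the paper gives immediately afterwards for Theorem \ref{thm: automorphism as isometry}, of which the paper explicitly notes this corollary is a special case; so your route is the more general one, while the paper's route has the merit of exhibiting $\gyr{\vec{u},\vec{v}}{}$ concretely as a word in left gyrotranslations, tying the corollary directly to Theorem \ref{thm: left gyrotranslation isometry}. Your alternative remark via the inclusion $\cset{\gyr{\vec{u},\vec{v}}{}}{\vec{u},\vec{v}\in\B}\subseteq\Or{\B}$ is also valid and is essentially the second sentence of Theorem \ref{thm: automorphism as isometry}.
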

\begin{proof}
Let $\vec{u}, \vec{v}\in\B$. According to Theorem 10 (3) of \cite{TSKW2015ITG}, we have $$L_{\vec{u}}\circ L_{\vec{v}} = L_{\vec{u}\oplus_E\vec{v}}\circ\gyr{\vec{u}, \vec{v}}{}.$$ Hence, $\gyr{\vec{u}, \vec{v}}{} = L_{\vec{u}\oplus_E\vec{v}}^{-1}\circ L_{\vec{u}}\circ L_{\vec{v}} = L_{-(\vec{u}\oplus_E\vec{v})}\circ L_{\vec{u}}\circ L_{\vec{v}}$. This implies that $\gyr{\vec{u}, \vec{v}}{}$ is an isometry of $(\B, d_E)$, being the composite of isometries.
\end{proof}

In fact, Corollary \ref{cor: gyroautomorphism as isometry} is a special case of the following theorem.

\begin{theorem}\label{thm: automorphism as isometry}
Every automorphism of  $(\B, \oplus_E)$ that preserves the Euclidean norm is an isometry of $\B$ with respect to $d_E$. Therefore, every transformation in $\Or{\B}$ is an isometry of $\B$.
\end{theorem}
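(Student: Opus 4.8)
The plan is to reduce the statement to the defining formula $d_E(\vec{u},\vec{v}) = \tanh^{-1}\norm{-\vec{u}\oplus_E\vec{v}}$ together with two elementary facts about a gyrogroup automorphism: it fixes the identity and it commutes with inversion. First I would record that any automorphism $\psi$ of $(\B,\oplus_E)$ satisfies $\psi(\vec{0}) = \vec{0}$, since $\psi$ is a bijective homomorphism, and $\psi(-\vec{v}) = -\psi(\vec{v})$ for all $\vec{v}\in\B$, which follows by applying $\psi$ to the identity $(-\vec{v})\oplus_E\vec{v} = \vec{0}$ and invoking uniqueness of inverses in a gyrogroup (equivalently, the cancellation laws of Theorem \ref{thm: basic propety of B}).

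Next, for arbitrary $\vec{u},\vec{v}\in\B$ I would compute, using that $\psi$ preserves $\oplus_E$ and the previous step,
$$
-\psi(\vec{u})\oplus_E\psi(\vec{v}) = \psi(-\vec{u})\oplus_E\psi(\vec{v}) = \psi(-\vec{u}\oplus_E\vec{v}).
$$
If, in addition, $\psi$ preserves the Euclidean norm, then
$$
\norm{-\psi(\vec{u})\oplus_E\psi(\vec{v})} = \norm{\psi(-\vec{u}\oplus_E\vec{v})} = \norm{-\vec{u}\oplus_E\vec{v}},
$$
and applying the strictly increasing — hence injective — function $\tanh^{-1}$ to both ends gives $d_E(\psi(\vec{u}),\psi(\vec{v})) = d_E(\vec{u},\vec{v})$. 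Since $\psi$ is a bijection of $\B$, this already shows $\psi$ is an isometry of $(\B,d_E)$, proving the first assertion.

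For the concluding sentence I would appeal to the observation already made at the start of this section: every $\sigma\in\Or{\B}$ equals $\res{\tau}{\B}$ for some orthogonal transformation $\tau$ of $\R^n$, and because $\oplus_E$ is built solely from vector addition, scalar multiplication, and the Euclidean inner product — all of which $\tau$ preserves — the restriction $\res{\tau}{\B}$ is an automorphism of $(\B,\oplus_E)$ that preserves the Euclidean norm. The first part then applies verbatim. I do not expect a genuine obstacle here; the only points needing a line of care are the verification that automorphisms preserve inversion and that an orthogonal transformation really does restrict to an operation-preserving self-map of $\B$, both of which are routine.
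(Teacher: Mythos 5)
Your proof is correct and follows essentially the same route as the paper's: the paper's ``direct computation'' is precisely the identity $-\tau(\vec{x})\oplus_E\tau(\vec{y}) = \tau(-\vec{x}\oplus_E\vec{y})$ followed by norm preservation, and the second assertion is likewise reduced to the earlier observation that orthogonal transformations restrict to norm-preserving automorphisms of $(\B,\oplus_E)$. You merely make explicit the steps ($\psi(\vec{0})=\vec{0}$ and $\psi(-\vec{v})=-\psi(\vec{v})$) that the paper leaves implicit.
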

\begin{proof}
Let $\tau\in\aut{\B, \oplus_E}$ and suppose that $\tau$ preserves the Euclidean norm. Then $\tau$ is bijective. Direct computation shows that
$$
d_E(\tau(\vec{x}), \tau(\vec{y})) = \tanh^{-1}\norm{\tau(-\vec{x}\oplus_E \vec{y})} = \tanh^{-1}\norm{-\vec{x}\oplus_E \vec{y}} = d_E(\vec{x}, \vec{y})
$$
for all $\vec{x}, \vec{y}\in\B$. Hence, $\tau$ is an isometry of $(\B, d_E)$. The remaining part of the theorem is immediate since $\Or{\B}\subseteq \aut{\B, d_E}$.
\end{proof}

Next, we give a complete description of the isometry group of $(\B, d_E)$ using Abe's result \cite{TA2014GPM}.

\begin{theorem}\label{thm: isometry group of Einstein gyrogroup}
The isometry group of $(\B, d_E)$ is given by
\begin{equation}
\Iso{\B, d_E} = \cset{L_{\vec{u}}\circ \tau}{\vec{u}\in\B\textrm{ and }\tau\in\Or{\B}}.
\end{equation}
\end{theorem}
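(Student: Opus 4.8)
The plan is to prove the two inclusions that make up the asserted equality. One of them, $\cset{L_{\vec{u}}\circ\tau}{\vec{u}\in\B\textrm{ and }\tau\in\Or{\B}}\subseteq\Iso{\B, d_E}$, is immediate from what precedes: by Theorem \ref{thm: left gyrotranslation isometry} every left gyrotranslation $L_{\vec{u}}$ belongs to $\Iso{\B, d_E}$, by Theorem \ref{thm: automorphism as isometry} every $\tau\in\Or{\B}$ belongs to $\Iso{\B, d_E}$, and $\Iso{\B, d_E}$ is a group under composition; hence each composite $L_{\vec{u}}\circ\tau$ lies in it. All of the content is in the reverse inclusion $\Iso{\B, d_E}\subseteq\cset{L_{\vec{u}}\circ\tau}{\vec{u}\in\B\textrm{ and }\tau\in\Or{\B}}$.

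For that inclusion I would reduce an arbitrary isometry to one fixing the base point $\vec{0}$. Given $f\in\Iso{\B, d_E}$, put $\vec{u}=f(\vec{0})\in\B$ and $g=L_{-\vec{u}}\circ f$. Since $L_{-\vec{u}}=L_{\vec{u}}^{-1}$ (item (6) of Theorem \ref{thm: basic propety of B}) is an isometry and $f$ is a bijective isometry, $g$ is a bijective isometry of $(\B, d_E)$, and by the inverse property $g(\vec{0})=(-\vec{u})\oplus_E\vec{u}=\vec{0}$. Because $f=L_{\vec{u}}\circ g$ with $\vec{u}\in\B$, it then suffices to show that $g\in\Or{\B}$.

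To identify $g$ with the restriction of an orthogonal transformation I would appeal to Abe's theorem (Theorem 3.1 of \cite{TA2014GPM}), used earlier in this section for gyroautomorphisms, after checking its hypotheses for $g$. Since $\tanh^{-1}$ is injective, $g$ preserves the gyrometric $\vrho_E$; and since $\vrho_E(\vec{0},\vec{v})=\norm{\vec{v}}$ for all $\vec{v}\in\B$, the identity $g(\vec{0})=\vec{0}$ forces $\norm{g(\vec{v})}=\vrho_E(g(\vec{0}),g(\vec{v}))=\vrho_E(\vec{0},\vec{v})=\norm{\vec{v}}$, so $g$ preserves the Euclidean norm. The one hypothesis that remains to be verified, that $g$ is an automorphism of $(\B,\oplus_E)$, is the step I expect to be the main obstacle: it is a Mazur--Ulam type assertion, to the effect that a $\vrho_E$-preserving bijection of $\B$ fixing $\vec{0}$ must commute with Einstein addition. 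I would prove it by the gyrogroup analogue of the Mazur--Ulam argument: $(\B, d_E)$ is a model of hyperbolic $n$-space, so any two points have a unique metric midpoint, metric midpoints are preserved by isometries, and matching the metric midpoint of $\vec{x}$ and $\vec{y}$ with its algebraic description via Einstein coaddition and scalar multiplication (the gyromidpoint identity) yields that an isometry fixing $\vec{0}$ respects $\oplus_E$. Granted this, Abe's theorem supplies a $\tau\in\Or{\R^n}$ with $\res{\tau}{\B}=g$, that is, $g\in\Or{\B}$.

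Assembling everything, $f=L_{\vec{u}}\circ\tau$ with $\vec{u}\in\B$ and $\tau\in\Or{\B}$, which gives the reverse inclusion, and combined with the first inclusion this proves the theorem. Beyond the Mazur--Ulam type step, the argument is purely formal: the group structure of $\Iso{\B, d_E}$, the reduction to a base-point-fixing isometry, and the bookkeeping with $L_{\vec{u}}$, $L_{\vec{u}}^{-1}$, and the inverse property, all resting on Theorems \ref{thm: left gyrotranslation isometry}, \ref{thm: automorphism as isometry}, and \ref{thm: basic propety of B}.
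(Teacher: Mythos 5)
Your argument follows essentially the same route as the paper: the forward inclusion from Theorems \ref{thm: left gyrotranslation isometry} and \ref{thm: automorphism as isometry}, and the reverse inclusion by factoring an arbitrary isometry $f$ as $L_{f(\vec{0})}\circ g$ with $g$ a gyrometric-preserving bijection fixing $\vec{0}$, then invoking Abe's Theorem 3.1 to conclude $g\in\Or{\B}$. The one place you diverge is in what you take Abe's theorem to require. The step you single out as ``the main obstacle'' --- showing that $g$ is an automorphism of $(\B,\oplus_E)$ --- is not a hypothesis the paper needs: Abe's Theorem 3.1 is itself the Mazur--Ulam-type rigidity statement that a surjective gyrometric-preserving self-map of $\B$ (fixing $\vec{0}$) is the restriction of an orthogonal transformation, and the paper applies it to $\rho$ using only bijectivity, $\rho(\vec{0})=\vec{0}$, and preservation of $\vrho_E$. (The earlier passage listing three properties of $\gyr{\vec{u},\vec{v}}{}$ is describing the gyroautomorphisms, not enumerating the hypotheses of Abe's theorem.) So the work you propose to do by hand --- unique metric midpoints, the gyromidpoint identity, deducing that a $\vrho_E$-preserving bijection fixing $\vec{0}$ respects $\oplus_E$ --- is exactly the content already packaged inside the cited result. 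This matters for assessing your write-up: as it stands, that step is only a sketch (passing from midpoint preservation to being a $\oplus_E$-homomorphism in a gyrogroup is a genuinely nontrivial argument, not a one-line consequence of the Mazur--Ulam template), so if Abe's theorem really did demand the automorphism hypothesis, your proof would have a gap there. Under the paper's reading of Abe's theorem, however, you can simply delete that step, and the remainder of your argument --- the reduction to a base-point-fixing isometry, the norm-preservation computation $\norm{g(\vec{v})}=\vrho_E(\vec{0},g(\vec{v}))=\vrho_E(\vec{0},\vec{v})=\norm{\vec{v}}$, and the reassembly $f=L_{\vec{u}}\circ\tau$ --- is complete and matches the paper's proof.
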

\begin{proof}
By Theorems \ref{thm: left gyrotranslation isometry} and \ref{thm: automorphism as isometry}, $$\cset{L_{\vec{u}}\circ \tau}{\vec{u}\in\B\textrm{ and }\tau\in\Or{\B}}\subseteq \Iso{\B, d_E}.$$ Let $\psi\in \Iso{\B, d_E}$. By definition, $\psi$ is a bijection from $\B$ to itself. By Theorem 11 of \cite{TSKW2015ITG}, $\psi = L_{\psi(\vec{0})}\circ\rho$, where $\rho$ is a bijection from $\B$ to itself that leaves $\vec{0}$ fixed. As in the proof of Theorem 18 (2) of \cite{TS2016TAG}, $ L_{\psi(\vec{0})}^{-1} = L_{-\psi(\vec{0})}$ and so $\rho = L_{-\psi(\vec{0})}\circ \psi$. Therefore, $\rho$ is an isometry of $(\B, d_E)$. Since $d_E(\rho(\vec{x}), \rho(\vec{y})) = d_E(\vec{x}, \vec{y})$ and $\tanh^{-1}$ is injective, it follows that
$$
\norm{-\rho(\vec{x})\oplus_E \rho(\vec{y})} = \norm{-\vec{x}\oplus_E\vec{y}}
$$
for all $\vec{x},\vec{y}\in\B$. Hence, $\rho$ preserves the Einstein gyrometric. By Theorem 3.1 of \cite{TA2014GPM}, $\rho = \res{\tau}{\B}$, where $\tau$ is an orthogonal transformation of $\R^n$. This proves the reverse inclusion.
\end{proof}

By Theorem \ref{thm: isometry group of Einstein gyrogroup}, every isometry of $(\B, d_E)$ has a (unique) expression as the composite of a left gyrotranslation and the restriction of an orthogonal transformation of $\R^n$ to the unit ball. According to the commutation relation (55) of \cite{TS2016TAG} for the case of the Einstein gyrogroup, one has the following composition law of isometries of $(\B, d_E)$:
\begin{equation}\label{eqn: composition law, map form}
(L_\vec{u}\circ \alpha)\circ(L_{\vec{v}}\circ \beta) = L_{\vec{u}\,\oplus_E\, \alpha(\vec{v})}\circ (\gyr{\vec{u}, \alpha(\vec{v})}{}\circ\alpha\circ\beta)
\end{equation}
for all $\vec{u},\vec{v}\in\B, \alpha,\beta\in\Or{\B}$. This reminds us of the composition law of Euclidean isometries. Note that $L_\vec{u}\circ \alpha = L_{\vec{v}}\circ \beta$, where $\vec{u},\vec{v}\in\B$ and $\alpha,\beta\in\Or{\B}$, if and only if $\vec{u} = \vec{v}$ and $\alpha = \beta$. This combined with \eqref{eqn: composition law, map form} implies that the map $L_{\vec{v}}\circ \tau \mapsto (\vec{v}, \tau)$ defines an isomorphism from the isometry group of $(\B, d_E)$ to the {\it gyrosemidirect product} $\B\rtimes_{\rm gyr}\Or{\B}$, which is a group consisting of the underlying set $$\cset{(\vec{v}, \tau)}{\vec{v}\in\B\textrm{ and }\tau\in\Or{\B}}$$ and group multiplication
\begin{equation}\label{eqn: group law of Bxgyr O(B)}
(\vec{u}, \alpha)(\vec{v}, \beta) = (\vec{u}\oplus_E \alpha(\vec{v}), \gyr{\vec{u}, \alpha(\vec{v})}{}\circ\alpha\circ\beta).
\end{equation}
For the relevant definition of a gyrosemidirect product, see Section 2.6 of \cite{AU2008AHG}.  Equation \eqref{eqn: group law of Bxgyr O(B)} is an analogous result in Euclidean geometry that the isometry group of $n$-dimensional Euclidean space $\R^n$ can be realized as the semidirect pro-duct $\R^n\rtimes \Or{\R^n}$. The result that the group of {\it holomorphic} automorphisms of a bounded symmetric domain can be realized as a gyrosemidirect product is proved by Friedman and Ungar in Theorem 3.2 of \cite{MR1290679}. Further, a characterization of continuous endomorphisms of the {\it three-dimensional} Einstein gyrogroup is obtained; see Theorem 1 of \cite{MR3455332}.

As an application of Theorem \ref{thm: isometry group of Einstein gyrogroup}, we show that the space $(\B, d_E)$ is homo-geneous; that is, there is an isometry of $(\B, d_E)$ that sends $\vec{u}$ to $\vec{v}$ for all arbitrary points $\vec{u}$ and $\vec{v}$ in $\B$. We also give an easy way to construct {\it point-reflection} symmetries of the unit ball.

\begin{theorem}[Homogeneity]
If $\vec{u}$ and $\vec{v}$ are arbitrary points in $\B$, then there is an isometry $\psi$ of $(\B, d_E)$ such that $\psi(\vec{u}) = \vec{v}$. In other words, $(\B, d_E)$ is homogeneous.
\end{theorem}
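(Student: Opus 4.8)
The plan is to exploit the homogeneity that is already visible in the structure of the isometry group. Given arbitrary points $\vec{u}, \vec{v}\in\B$, I would first observe that $L_{-\vec{u}}$ maps $\vec{u}$ to $\vec{0}$ by the left cancellation law (or directly from the inverse property), namely $L_{-\vec{u}}(\vec{u}) = -\vec{u}\oplus_E\vec{u} = \vec{0}$, and that $L_{\vec{v}}$ maps $\vec{0}$ to $\vec{v}$ since $\vec{0}$ is the identity element. Both $L_{-\vec{u}}$ and $L_{\vec{v}}$ are isometries of $(\B, d_E)$ by Theorem~\ref{thm: left gyrotranslation isometry}.

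Then I would simply set $\psi = L_{\vec{v}}\circ L_{-\vec{u}}$, which is an isometry of $(\B, d_E)$ as the composite of two isometries, and verify that
$$
\psi(\vec{u}) = L_{\vec{v}}\bigl(L_{-\vec{u}}(\vec{u})\bigr) = L_{\vec{v}}(\vec{0}) = \vec{v}\oplus_E\vec{0} = \vec{v}.
$$
This establishes the existence of the desired isometry, hence the homogeneity of $(\B, d_E)$. A slicker one-step alternative is to take $\psi = L_{\vec{v}\oplus_E(-\vec{u})}$ (or, more carefully, $L_{\vec{v}}\circ L_{-\vec{u}}$ composed into a single left gyrotranslation via the composition law \eqref{eqn: composition law, map form}), but the two-factor description is cleanest and requires no further computation.

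I do not anticipate any real obstacle here: the theorem is an immediate corollary of Theorem~\ref{thm: left gyrotranslation isometry} together with the identity and inverse properties of the Einstein gyrogroup. The only point requiring a moment's care is to make sure one applies the left gyrotranslations in the correct order and uses the left cancellation law (Theorem~\ref{thm: basic propety of B}(1)) rather than attempting to manipulate $-\vec{u}\oplus_E\vec{u}$ through the nonassociative operation. No use of Theorem~\ref{thm: isometry group of Einstein gyrogroup} or Abe's result is actually needed for this particular statement, though the remark preceding it frames homogeneity as an ``application'' of that theorem.
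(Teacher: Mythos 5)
Your proposal is correct and is essentially identical to the paper's proof: the paper also defines $\psi = L_{\vec{v}}\circ L_{-\vec{u}}$, notes it is an isometry as a composite of isometries (via Theorem~\ref{thm: left gyrotranslation isometry}), and computes $\psi(\vec{u}) = \vec{v}\oplus_E(-\vec{u}\oplus_E\vec{u}) = \vec{v}$.
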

\begin{proof}
Let $\vec{u}, \vec{v}\in\B$. Define $\psi = L_{\vec{v}}\circ L_{-\vec{u}}$. Then $\psi$ is an isometry of $(\B, d_E)$, being the composite of isometries. It is clear that $\psi(\vec{u}) = \vec{v}\oplus_E(-\vec{u}\oplus_E \vec{u}) = \vec{v}$.
\end{proof}

\begin{theorem}[Symmetry]
For each point $\vec{v}\in\B$, there is a point-reflection $\sigma_{\vec{v}}$ of $(\B, d_E)$ corresponding to $\vec{v}$; that is, $\sigma_{\vec{v}}$ is an isometry of $(\B, d_E)$ such that $\sigma_{\vec{v}}^2$ is the identity transformation of $\B$ and $\vec{v}$ is the unique fixed point of $\sigma_{\vec{v}}$.
\end{theorem}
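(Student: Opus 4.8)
The plan is to reduce to the base point $\vec 0$ and then transport the reflection by a left gyrotranslation. For $\vec v=\vec 0$ the obvious candidate is the antipodal map $\iota\colon\vec w\mapsto-\vec w$: it lies in $\Or{\B}$, hence is an isometry of $(\B,d_E)$ by Theorem \ref{thm: automorphism as isometry}; it satisfies $\iota^2=\mathrm{id}_{\B}$ since $-(-\vec w)=\vec w$; and $\iota(\vec w)=\vec w$ forces $-\vec w=\vec w$, so $\vec w=\vec 0$, i.e.\ $\vec 0$ is the unique fixed point of $\iota$.

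For arbitrary $\vec v\in\B$ I would put
$$
\sigma_{\vec v}=L_{\vec v}\circ\iota\circ L_{\vec v}^{-1}=L_{\vec v}\circ\iota\circ L_{-\vec v},
$$
using $L_{\vec v}^{-1}=L_{-\vec v}$ from Theorem \ref{thm: basic propety of B}. Being a composite of the isometries $L_{\vec v}$, $\iota$, $L_{-\vec v}$ (Theorems \ref{thm: left gyrotranslation isometry} and \ref{thm: automorphism as isometry}), $\sigma_{\vec v}$ is an isometry of $(\B,d_E)$, in particular a bijective self-map of $\B$. For the involution property, in the product $\sigma_{\vec v}^2=L_{\vec v}\circ\iota\circ L_{-\vec v}\circ L_{\vec v}\circ\iota\circ L_{-\vec v}$ the inner block $L_{-\vec v}\circ L_{\vec v}$ is the identity, after which $\iota\circ\iota=\mathrm{id}_{\B}$ and $L_{\vec v}\circ L_{-\vec v}=\mathrm{id}_{\B}$ collapse the rest, giving $\sigma_{\vec v}^2=\mathrm{id}_{\B}$.

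It remains to determine the fixed points. For existence, $\sigma_{\vec v}(\vec v)=L_{\vec v}\big(\iota(-\vec v\oplus_E\vec v)\big)=L_{\vec v}(\iota(\vec 0))=L_{\vec v}(\vec 0)=\vec v$. For uniqueness, suppose $\sigma_{\vec v}(\vec w)=\vec w$; applying $L_{\vec v}^{-1}=L_{-\vec v}$ to both sides and setting $\vec z=L_{-\vec v}(\vec w)=-\vec v\oplus_E\vec w$, this reads $\iota(\vec z)=\vec z$, whence $\vec z=\vec 0$ by the $\vec v=\vec 0$ case. Then $-\vec v\oplus_E\vec w=\vec 0$, and applying $L_{\vec v}$ together with the left cancellation law of Theorem \ref{thm: basic propety of B} yields $\vec w=\vec v$. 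Hence $\vec v$ is the unique fixed point of $\sigma_{\vec v}$.

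I do not anticipate a genuine obstacle; the only point that deserves a careful sentence is the uniqueness of the fixed point, which I would phrase conceptually: since conjugation by a bijection carries fixed-point sets onto fixed-point sets, $\mathrm{Fix}(\sigma_{\vec v})=L_{\vec v}\big(\mathrm{Fix}(\iota)\big)=L_{\vec v}(\{\vec 0\})=\{\vec v\}$. One could alternatively record the closed form $\sigma_{\vec v}(\vec w)=\vec v\oplus_E\big({-}({-\vec v}\oplus_E\vec w)\big)$, but the conjugation description is the most economical for the verification above.
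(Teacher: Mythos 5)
Your proposal is correct and follows essentially the same route as the paper: both define $\sigma_{\vec{v}} = L_{\vec{v}}\circ\iota\circ L_{-\vec{v}}$, obtain the isometry property from Theorems \ref{thm: left gyrotranslation isometry} and \ref{thm: automorphism as isometry}, and derive the involution and unique-fixed-point properties by transporting the corresponding facts about $\iota$ at $\vec{0}$ via conjugation. Your write-up simply makes explicit the cancellations ($L_{-\vec{v}}\circ L_{\vec{v}}=\mathrm{id}_{\B}$, $\iota^2=\mathrm{id}_{\B}$) and the fixed-point argument that the paper leaves to the reader.
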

\begin{proof}
Let $\iota$ be the negative map of $\B$; that is, $\iota(\vec{w}) = -\vec{w}$ for all $\vec{w}\in\B$. In view of \eqref{eqn: Euclidean Einstein addition}, it is clear that $\iota$ is an automorphism of $\B$ with respect to $\oplus_E$. By Theorem \ref{thm: automorphism as isometry}, $\iota$ is an isometry of $(\B, d_E)$. Define $\sigma_\vec{v} = L_{\vec{v}}\circ \iota\circ L_{-\vec{v}}$. Then $\sigma_{\vec{v}}$ is an isometry of $(\B, d_E)$ that is a point-reflection of $\B$ corresponding to $\vec{v}$. The uniqueness of the fixed point of $\sigma_{\vec{v}}$ follows from the fact that $\vec{0}$ is the unique fixed point of $\iota$.
\end{proof}

\vspace{0.3cm}
\noindent{\bf Acknowledgements.} The author would like to thank Themistocles M. Rassias for his generous collaboration. He also thanks anonymous referees for useful \mbox{comments}.

\bibliographystyle{amsplain}\addcontentsline{toc}{section}{References}
\bibliography{References}
\end{document}